\renewcommand*{\geq}{\geqslant}
\renewcommand*{\leq}{\leqslant}
\numberwithin{equation}{section}
\newcommand*{\Sscr}{\mathcal S}
\newcommand*{\N}{\mathbb{N}}
\newcommand*{\R}{\mathbb{R}}
\renewcommand*{\d}{\mathrm{d}}
\DeclareMathOperator{\var}{var}
\def\wt{\widetilde}
\def\pas{\ \d P \mbox{-a.s.}}
\def\<{\left<}
\def\>{\right>}
\def\({\left(}
\def\){\right)}
\def\o{\omega}
\def\O{\Omega}
\def\9{\infty}
\def\R{\mathbb R}
\def\N{\mathbb N}
\def\shb{{\cal B}}
\def\shc{{\cal C}}
\def\shf{{\cal F}}
\def\shm{{\cal M}}
\def\shs{{\cal S}}
\newcommand{\norm}[1]{\left\| #1 \right\|}
\newtheorem{theo}{Theorem}[section]
\newtheorem{Assumption}[theo]{Assumption}
\newtheorem{rem}[theo]{Remark}
\newtheorem{defi}[theo]{Definition}
\newcommand{\beqnar}{\begin{eqnarray*}}
\newcommand{\eeqnar}{\end{eqnarray*}}
\newcommand{\ba}{\begin{array}}
\newcommand{\ea}{\end{array}}
\newcommand{\halb}{\frac{1}{2}}
\begin{document}

\title{Uniqueness for a class of stochastic Fokker-Planck and porous media equations.}

\author{ Michael R\"ockner (1)
and Francesco Russo (2) } 

\date{September 1st 2016}
\maketitle

\thispagestyle{myheadings}
\markright{Stochastic Fokker-Planck equation.}

{\bf Summary.} The purpose of the present note consists of first
showing a uniqueness result for a stochastic Fokker-Planck equation 
under very general assumptions. In particular, the second order
 coefficients may be just measurable and
degenerate. We also provide a proof 
for uniqueness of a stochastic porous media equation in 
a fairly large space.

{\bf Key words}: stochastic partial differential equations;
infinite volume; porous media type equation;
 multiplicative noise;
stochastic Fokker-Planck type equation.

{\bf2010  AMS-classification}: 35R60; 60H15;  82C31.

\begin{itemize}
\item[(1)] Michael R\"ockner,
Fakult\"at f\"ur Mathematik, 
Universit\"at   Bielefeld, 
\\ D--33615 Bielefeld, Germany.
\item[(2)] Francesco Russo,
ENSTA ParisTech, Universit\'e Paris-Saclay,
Unit\'e de Math\'ematiques appliqu\'ees,
 828, boulevard des Mar\'echaux,
F-91120 Palaiseau, France.

\end{itemize}

\vfill \eject

\section{Introduction}

\setcounter{equation}{0}

We consider  real functions 
$ e^0, \ldots, e^N, \dots: \R \rightarrow \R$ 
 fulfilling Assumption \ref{AMultipl} below.
In particular they are $H^{-1}$-multipliers, 
see Definition \ref{DMultipl}. 

 Let $ T > 0$ and $(\Omega, \shf, P)$,
be a fixed  probability space.
Let $(\shf_t, t \in [0,T])$ be a  filtration
fulfilling the usual conditions and 
we suppose $\shf = \shf_T$.
Let $\mu(t,\xi), t \in [0,T], \xi \in \R,$ be a random field
of the type
$$ \mu(t,\xi) = \sum_{i=1}^\infty e^i (\xi) W^i_t + e^0(\xi) t,
 \ t \in [0,T], 
\xi \in \R,$$
where $W^i, i \ge 1$
are independent continuous
 $(\shf_t)$-Brownian motions   on $(\Omega, \shf, P)$, 
 which are fixed from now on until the end of the paper.
For technical reasons we will sometimes set
$W^0_t \equiv t$.

We now consider  a random field $a$ and a deterministic function $\psi$ 
as follows.
\begin{Assumption} \label{ABorel}
 $a:[0,T]\times\R\times\O\to\R_+$ is a bounded progressively
measurable random field.
\end{Assumption}
\begin{Assumption} \label{ALipschitz}
 $\psi$ is monotone increasing, Lipschitz such that
 $\psi(0) = 0$.
\end{Assumption}

 Let $x_0 \in \shs'(\R)$.
We will consider the following two types of equations \eqref{e5.1} and  
\eqref{PME}. The first one 
is a (linear) stochastic Fokker-Planck equation, the second one
a stochastic porous media type equation, i.e.
\begin{align}\label{e5.1}
\begin{cases} 
\partial_t z(t,\xi) = \partial^2_{\xi\xi} ((a z)(t,\xi)) + z(t,\xi) 
\partial_t\mu (t, \xi),\\
z(0,\;\cdot\;) = x_0,
\end{cases}
\end{align}
and
\begin{equation}
\label{PME}
\left \{
\begin{array}{ccl}
\partial_t X(t,\xi)&=& \frac{1}{2} \partial_{\xi \xi}^2(\psi(X(t, \xi) ) +
 X(t,\xi) \partial_t \mu(t, \xi),
\\
X(0,\d \xi)& = & x_0. 
\end{array}
\right.
\end{equation}
They are both to be understood in the sense of  (Schwartz) distributions.
Their precise sense will be given  in Remark \ref{R5.2} a) 
and in Definition \ref{DSPDE}.
The stochastic multiplication above is of It\^o type.
In this paper we confine ourselves to the case of the underlying
space being $\R^1$.

Fokker-Planck equations have been investigated until now in
the deterministic framework, i.e. when $e^i = 0, i \ge 1$.
There is a huge literature about existence and uniqueness in this case,
 see e.g. \cite{BKRS} 
and references therein. More particularly, concerning uniqueness, in addition 
we draw the  attention 
to  Proposition 3.4 \cite{BRR1} 
and Theorem 3.1 of \cite{BCR2}.
As far as we know this is  the first  time that  a Fokker-Planck 
equation as \eqref{e5.1}  is considered in the literature, in particular for uniqueness, 
except for   the unpublished work by the same authors \cite{Bar-Roc-Rus-2200}.
 We point out that we can allow degenerate coefficients in the second order 
term.

Concerning porous media equations, both in the deterministic
and stochastic cases, there is a huge number of contributions,
especially in finite volume. As far as the infinite volume case is concerned,
in the deterministic situation a good framework   is the classical Benilan-Crandall approach 
of the seventies; in the stochastic case some recent significant contributions
have been made, see \cite{Ren, wang2008, BRR3} and in particular \cite{BarbuPorousBook}
and references therein.
 As mentioned, this paper draws 
however the attention on uniqueness for equations in the sense of distributions,
within a large solutions class.  For instance, in
the deterministic case a typical result in that sense is the paper \cite{BrC79}
of Brezis and Crandall, which establishes uniqueness in the sense of distributions 
in the class $(L^1 \cap L^\infty)([0,T] \times \R^d)$. 
Here  we consider the equation
\eqref{PME} in the sense of distributions and we investigate
uniqueness in the  class of progressively measurable random fields $X:\Omega \times [0,T] \times \R $ such that $ \int_{[0,T]\times\R} X^2(s,\xi)\d s\d \xi <\9$ a.s, see Definition \ref{DSPDE} and condition below \eqref{eFB1}.
 To the best of our knowledge, this constitutes a new result of uniqueness
 in the sense of distributions; for this we need only a.s.
 conditions in \eqref{eFB1} and not necessarily in the expectation 
as it is mostly done in the standard literature.


The paper is organized as follows. After, this introduction
and Section \ref{SPrelim}  devoted to preliminaries,
in Section \ref{S5} 
 the uniqueness Theorem \ref{T51} for an SPDE of Fokker-Planck type is
 formulated and proved. This, in turn, is   an important ingredient for the 
probabilistic representation of a solution to a
stochastic porous media type equation, see
\cite{Bar-Roc-Rus-2200}.
In the final section \ref{SUniqPME}, using the same ideas
as in Section \ref{S5}  we prove a uniqueness result for 
\eqref{PME}, see Theorem \ref{TB1}.

\section{Preliminaries} 
\label{SPrelim}

\setcounter{equation}{0}


First we introduce some basic recurrent notations.
$\shm(\R)$ 
 denotes the space of
signed Borel measures with finite total variation.
We recall that  $\shs(\R)$ is the space of the Schwartz fast decreasing
test functions with its usual topology.  $\shs'(\R)$ is its dual, i.e. 
the space of Schwartz tempered distributions.
On $\shs'(\R)$, the map $(I-\Delta)^\frac{s}{2},  s \in \R,$ is well-defined,
via Fourier transform.
For $s \in \R$, $H^s(\R)$ denotes the classical
Sobolev space consisting of all functions  $f \in \shs'(\R)$ such that 
$(I-\Delta)^\frac{s}{2} f \in L^2(\R)$.
We introduce   the norm
 $$\Vert f \Vert_{H^s} := \Vert (I-\Delta)^\frac{s}{2} f \Vert_{L^2},$$
where $\Vert \cdot \Vert_{L^p}$ is the classical $L^p(\R)$-norm
for $ 1 \le p \le \infty$.
In the sequel, we will often simply denote $H^{-1}(\R)$, 
by  $H^{-1}$ and $L^2(\R)$  by $L^2$.
Furthermore, $W^{r,p}$ denotes the classical Sobolev space of order $r \in \N$
in $L^p(\R)$ for $1 \le p \le \infty$. 
\begin{defi} \label{DMultipl}
Given a function $e$ belonging to $L^1_{\rm loc}(\R) \cap  \shs'(\R)$, we say that it is an
{\bf $H^{-1}$-multiplier}, if the map
$ \varphi \mapsto \varphi  e$ 
is continuous from $\shs(\R)$ to $H^{-1}$ 
with respect to the $H^{-1}$-topology on both spaces.
  $\shc(e)$
denotes the norm of this operator and we will call it {\bf multiplier norm}.
We remark that $  \varphi e$ is always 
 a well-defined Schwartz tempered distribution, 
 whenever $\varphi$ is a fast decreasing test function.
\end{defi}
\begin{rem}  \label{LMultipl}
Let $e\, : \, \R \to \R$.  If  $e\in W^{1,\infty}$ 
(for instance  if $e \in W^{2,1}$), 
 then $e$ is a $H^{-1}(\R)$-multiplier.\\
Indeed, by duality arguments, to show this, 
it is enough to show the existence of
a constant $\shc(e)$ such that
\begin{equation}\label{Lmult1} 
\norm{eg}_{H^1} \leq \shc(e)\norm{g}_{H^1},\; \forall\; g \in \Sscr (\R).
\end{equation}
Now \eqref{Lmult1} follows easily by the derivation product rules
with for instance $$\shc(e)=\sqrt 2 \left(\norm{e}^2_\infty + 
\norm{e'}^2_\infty\right)^{\frac12}. $$
\end{rem}

Here we fix some conventions concerning measurability.
Any  topological space $E$ is naturally equipped with its
Borel $\sigma$-algebra $\shb(E)$. 
For instance
$\shb(\R)$ (resp. $\shb([0,T]$) denotes 
the Borel $\sigma$-algebra
of $\R$ (resp. $[0,T]$).

In the whole paper, the following assumption on $\mu$ will be in force.
\begin{Assumption} \label{AMultipl}
\begin{enumerate}
\item Each $e^i, i \ge 0,$ belongs to the Sobolev space
$W^{1,\infty}$.
\item
 $ \sum_{i=1}^\infty  \left( \Vert (e^i)'\Vert_\infty^2 
  +  \Vert e^i\Vert_\infty^2 \right) < \infty.$ 

\end{enumerate}

\end{Assumption}

With respect to  the random field $\mu$, we introduce a notation 
for the It\^o type  stochastic integral below.

Let  $ Z = (Z(s, \xi), s \in [0,T], \xi \in \R)$ be a random field
on $(\Omega, \shf, (\shf_t), P) $
such that $\int_0^T \left (\int_{\R}  \vert Z(s,\xi) \vert \d \xi \right)^2 
\d s < \infty$
a.s. and it is an $L^1(\R)$-valued, $(\shf_s)$-progressively measurable
 process.
 Then, provided, Assumption \ref{AMultipl} holds, the stochastic integral
\begin{equation} \label{DSI}
\int_{[0,t]\times \R} Z(s, \xi) \mu(\d s, \xi)  := \sum_{i=0}^\infty \int_0^t 
 \left(\int_\R  e^i(\xi) Z(s,  \xi) \d \xi \right) \d W^i_s, t \ge 0,
 \end{equation}
is well-defined. \\More generally, if  
$ s \mapsto Z(s,\cdot)$ is a measurable map 
$[0,T] \times \Omega \mapsto \shm(\R)$,
 such that $\int_0^T \Vert Z(s,\cdot) \Vert_{\rm var}^2 \d s < \infty$, 
then the stochastic integral 
\begin{equation} \label{DSIbis}
\int_{[0,t]\times \R} Z(s,\d \xi)  \mu(\d s, \xi)  := \sum_{i=0}^\infty \int_0^t 
 \left(\int_\R e^i(\xi)  Z(s,\d \xi) \right) \d W^i_s, t \ge 0,
 \end{equation}
is well-defined.

 \section{On the uniqueness of a Fokker-Planck type SPDE}
\label{S5}

\setcounter{equation}{0}

The theorem below plays the analogous role as Theorem 3.8 in \cite{BRR1}
or Theorem 3.1 in \cite{BCR2}.
We recall that our Fokker-Planck SPDE has
 possibly degenerate measurable coefficients.

\begin{theo} \label{T51}
We suppose that Assumptions \ref{ABorel} and \ref{AMultipl} hold.
Let $z^1, z^2$ be two measurable random fields belonging $\o$ a.s. to 
$C([0,T],\shs'(\R))$ such that $z^1, z^2: ]0,T]\times \O \to \shm (\R)$.
We  moreover suppose  the following.
\begin{enumerate}
\item[i)] $z^1-z^2 \in L^2 ([0,T]\times \R)$ a.s.
\item[ii)] $t \mapsto (z^1-z^2)(t,\cdot)$ is an
 $(\shf_t)$-progressively measurable
$\shs'(\R)$-valued process.
\item[iii)] $z^1,z^2$ are solutions to \eqref{e5.1}.
 such that
$\int_0^T \Vert z^i(s,\cdot)\Vert^2_{\var} \d s < \infty$ a.s. 
\end{enumerate}
Then $z^1 \equiv z^2$.
\end{theo}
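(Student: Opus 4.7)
By the linearity of \eqref{e5.1}, the difference $z := z^1 - z^2$ solves the same SPDE with vanishing initial datum and inherits, from i)--iii), that $z \in C([0,T], \Sscr'(\R)) \cap L^2([0,T]\times\R)$ a.s.\ together with the required progressive measurability. Since the embedding $L^2 \hookrightarrow H^{-1}$ is continuous and injective, it suffices to prove $\|z(t,\cdot)\|_{H^{-1}} = 0$ a.s.\ for every $t \in [0,T]$. The natural route is an It\^o-formula/Gronwall argument for $t \mapsto \|z(t,\cdot)\|_{H^{-1}}^2$, localized by stopping times $\tau_N := \inf\{t : \|z(t,\cdot)\|_{H^{-1}}^2 \geq N\}\wedge T$ so that all expectations become finite and the martingale terms vanish after taking $\E$.

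The key computation exploits the identity $\partial^2_{\xi\xi} = -(I-\Delta) + I$, which yields the It\^o equation
\begin{equation*}
d\|z\|_{H^{-1}}^2 = \Bigl(-2\!\int_\R a\,z^2\,d\xi + 2\langle z, az\rangle_{H^{-1}} + 2\langle z, e^0 z\rangle_{H^{-1}} + \sum_{i\geq 1}\|e^i z\|_{H^{-1}}^2\Bigr)dt + dM_t,
\end{equation*}
whose first summand is non-positive by Assumption \ref{ABorel}. For the cross term, Cauchy--Schwarz combined with $\|(I-\Delta)^{-1}z\|_{L^2} \leq \|z\|_{H^{-1}}$ gives
\begin{equation*}
|\langle z, az\rangle_{H^{-1}}| = \Bigl|\int_\R a\,((I-\Delta)^{-1}z)\,z\,d\xi\Bigr| \leq \sqrt{\|a\|_\infty}\,\|z\|_{H^{-1}}\,\|\sqrt a\,z\|_{L^2},
\end{equation*}
so that Young's inequality absorbs $\|\sqrt{a}\,z\|_{L^2}^2$ into the good term at the cost of a multiple of $\|a\|_\infty\|z\|_{H^{-1}}^2$. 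The $e^0$-drift and the It\^o correction are each bounded by $C\,\|z\|_{H^{-1}}^2$ using $\|e^i z\|_{H^{-1}} \leq \Cscr(e^i)\|z\|_{H^{-1}}$ together with Assumption \ref{AMultipl}(2) and Remark \ref{LMultipl}. Taking expectation on $[0,\tau_N]$ and applying Gronwall's lemma then forces $z(\cdot\wedge\tau_N)\equiv 0$ in $H^{-1}$; letting $N\to\infty$ yields $z \equiv 0$.

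The main obstacle is the rigorous justification of the It\^o formula above: the drift $\partial^2_{\xi\xi}(az)$ lies a priori only in $H^{-2}$, and $a$, being merely $L^\infty$, is not necessarily an $H^{-1}$-multiplier, so the usual $H^1 \subset L^2 \subset H^{-1}$ variational framework is not directly applicable to $z$ itself. The clean fix is to pass to $y := (I-\Delta)^{-1}z$, for which
\begin{equation*}
\partial_t y = -a\,z + (I-\Delta)^{-1}(a\,z) + (I-\Delta)^{-1}(z\,\partial_t\mu),
\end{equation*}
has drift and diffusion coefficients taking values in $L^2$, and to apply a classical It\^o formula for $\|y(t)\|_{H^1}^2 = \|z(t)\|_{H^{-1}}^2$ in that setting (alternatively, to regularize $z$ by convolution with a mollifier $\rho_\epsilon$, apply It\^o to the smoothed equation, and pass to the limit, controlling the commutator $[\rho_\epsilon*,a]$ in $L^2$). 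Continuity of $t\mapsto \|z(t,\cdot)\|_{H^{-1}}^2$, needed to define the stopping times $\tau_N$, is then a by-product of the same equation for $y$.
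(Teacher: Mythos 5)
Your proposal is correct and its analytic core coincides with the paper's: the same rewriting of $\partial^2_{\xi\xi}(az)$ as $-(I-\Delta)(az)+az$, the same sign-definite term $-2\int_\R a z^2\,\d\xi$, the same Young-inequality absorption of $2\langle (I-\Delta)^{-1}z, az\rangle_{L^2}$ at the cost of a multiple of $\|a\|_\infty\|z\|_{H^{-1}}^2$, the same multiplier bounds for the $e^i$ terms, and localization plus Gronwall. The only genuine divergence is in how the It\^o formula is justified. The paper takes what you list as the ``alternative'' route: it mollifies, $z_\varepsilon = z\star\phi_\varepsilon$, applies It\^o to $\|z_\varepsilon\|_{H^{-1}}^2$ (unproblematic since $z_\varepsilon$ is an $L^2$-valued semimartingale), and passes to the limit term by term using only that $z$, $az$, $e^i z\in L^2([0,T]\times\R)$ a.s.; no commutator estimate for $[\phi_\varepsilon\star,\,a]$ is needed because the term $(az)\star\phi_\varepsilon$ is kept intact and converges to $az$ in $L^2([0,T]\times\R)$ by standard convolution facts. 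Your primary route --- the variational triple $H^2\subset H^1\subset L^2$ for $y=(I-\Delta)^{-1}z$, whose drift $-az+(I-\Delta)^{-1}(az)$ lies in $L^2([0,T];L^2)$ while $y\in L^2([0,T];H^2)$ --- is a legitimate and arguably cleaner alternative, provided you invoke a pathwise or pre-localized version of the It\^o formula for $\|\cdot\|_{H^1}^2$, since the integrability hypotheses here hold only a.s.\ and not in expectation. One small caution on the localization: a priori $z$ is only known to be continuous in $H^{-2}$, not in $H^{-1}$, which is why the paper builds its stopping times from $\int_0^t\|z(s,\cdot)\|_{L^2}^2\,\d s$ and $\|z(t,\cdot)\|_{H^{-2}}^2$; your $\tau_N$ based on $\|z(t,\cdot)\|_{H^{-1}}^2$ is admissible only after you have established, as you indicate, the continuity of $t\mapsto\|z(t,\cdot)\|_{H^{-1}}^2$ from the equation for $y$.
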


\begin{rem} \label{R5.2}
\begin{enumerate}
 \item[a)] 
By a solution $z$ of equation \eqref{e5.1} we mean the following: 
for every $\varphi \in \shs(\R),  \forall t \in [0,T]$, 
\begin{align}\label{e5.2}
\int_\R \varphi(\xi) z(t,\d \xi) &= \left< x_0, \varphi \right>  + \int^t_0 \d s \int_\R a (s,\xi) \varphi''(\xi) z(s,\d \xi)\\
&+\int_{[0,t]\times \R}  \varphi(\xi)  z(s,\d \xi)  \mu(\d s, \xi)
 \quad \text{a.s.}
\end{align}
\item[b)]  Let $z = z^1 - z ^2$. 
Since  $z$  is $\o$ a.s. in 
$L^2([0,T];L^2(\R) \cap \shm(\R)) \; \subset L^2([0,T];H^{-1}(\R))$,
 $\int^t_0   z(s, \cdot)  \mu(\d s, \; \cdot\;)$ 
belongs  $\o$ a.s. to $C([0,T];H^{-1}(\R))$ and so also to
$ C([0,T];H^{-2}(\R))$ $\o$ a.s.
On the other hand  $\int^t_0 (az)''(s, \cdot)\d s$ can be seen as a Bochner
 integral in $H^{-2}(\R)$.
In particular any solutions $z^1,z^2$ to \eqref{e5.1} are such that 
$z = z^1 - z^2$
admits a modification whose paths belong (a.s.)
to  $C([0,T];H^{-2}(\R)) \cap L^2([0,T];L^2(\R))$. Since $z^i, i =1,2$, are continuous with values in $\shs'(\R)$,
 their difference is indistinguishable  with the mentioned modification.
\\
Consequently for $\o$ a.s. $z(t,\cdot) \in C([0,T];H^{-2}(\R))$ and
 outside a $P$-null set $N_0$,
we have (in $\shs'(\R)$ and  $H^{-2}(\R)$ )
\begin{equation}\label{e5.3}
z(t,\cdot) = \int^t_0 (az)''(s,\cdot)\d s+\int_0^t z(s,\cdot) \mu (\d s,\cdot). 
\end{equation}
\item[c)] By assumption i),
possibly enlarging the $P$-null set $N_0$ we get the following.
For $\o\notin N_0$,
for almost all $t\in ]0,T]$,
$ \( \int^t_0 (az) (s,\cdot) \d s\right)''\in H^{-1}(\R)$ and so 
$\int^t_0 (az) (s,\;\cdot\;) \d s\in H^1\; \d t$ a.e.
\end{enumerate}
\end{rem}

\begin{proof}  [Proof of Theorem \ref{T51}]
Let $z = z^1-z^2$.

We fix the null set $N_0$ and so $\o$ will always lie outside $N_0$ introduced
in  Remark \ref{R5.2} c).
Let $\phi$ be a mollifier with compact support and $\phi_\varepsilon = \frac{1}{\varepsilon} \phi (\frac{\cdot}{\varepsilon})$ be a  generalized 
sequence of mollifiers converging to the Dirac delta function. We set
$$
g_\varepsilon(t) = \norm{z_\varepsilon (t)}^2_{H^{-1}}
= \int_\R z_\varepsilon (t,\xi) ((I-\Delta)^{-1} z_\varepsilon )(t,\xi)  \d \xi, 
$$
where $z_\varepsilon(t,\xi) = \int_\R \phi_\varepsilon (\xi-y) z(t,\d y) $.
Since $t \mapsto z(t,\;\cdot\;)$ is continuous in $H^{-2}(\R)$, 
$t\mapsto z_\varepsilon (t,\;\cdot\;)$ is continuous in $L^2(\R)$ and so also in $H^{-1}(\R)$.
 We look at the equation fulfilled by $z_\varepsilon$. The identity \eqref{e5.3}  produces the  following equality in $L^2(\R)$ and so in $H^{-1}(\R)$:
\begin{align}\label{e5.4}
z_\varepsilon (t,\;\cdot\;) 
&= \int^t_0  \left\{ \left[ \( a(s,\;\cdot\;) z (s,\;\cdot\;) \)
\star \phi_\varepsilon \right]'' - \(a(s,\;\cdot\;)z(s,\;\cdot\;)\)\star 
\phi_\varepsilon\right\}\d s\\
&+ \int^t_0 (a(s,\;\cdot\;)z(s,\;\cdot\;))\star \phi_\varepsilon  \d s
+ \sum_{i=0}^\infty \int^t_0   (e^i z)(s,\;\cdot\;) \star \phi_\varepsilon \d W^i_s.
 \nonumber
\end{align}
We apply $(I-\Delta)^{-1}$ and we get
\begin{align} \label{e5.4bis}
(I-\Delta)^{-1} z_\varepsilon (t,\;\cdot\;) &= -\int^t_0  (a(s,\;\cdot\;)
 z(s,\;\cdot\;)) \star \phi_\varepsilon\d s \\
&+ \int^t_0  (I-\Delta)^{-1} \left[ \(a(s,\;\cdot\;) z (s,\;\cdot\;) \)\star \phi_\varepsilon \right] \d s\\
&+ \sum_{i=0}^\infty \int^t_0 (I-\Delta)^{-1} (e^i z)(s,\;\cdot\;) 
\star \phi_\varepsilon \d W^i_s.
\end{align}
We apply It\^o's formula to $g_\varepsilon$.
For a general introduction to  infinite dimensional Hilbert space 
valued stochastic calculus, see \cite{dpz}, \cite{prevot} or \cite{liu}. 
Taking into account, \eqref{e5.4}, \eqref{e5.4bis} and that $\left<f,g\right>_{H^{-1}}=\left<f,(I-\Delta)^{-1}g\right>_{L^2}$, we now obtain
\begin{align}\label{e5.5}
 g_\varepsilon (t) &= 2\int^t_0 \left<z_\varepsilon (s,\;\cdot\;),\;\d z_\varepsilon (s,\;\cdot\;)\right>_{H^{-1}}  \\
&+ \sum_{i=1}^\infty \int^t_0 \left< (e^i z (s,\;\cdot\;)) 
\star \phi_\varepsilon, \;  
(e^i z (s,\;\cdot\;)) 
 \star \phi_\varepsilon  \right>_{H^{-1}} \d s \\
&= -2\int^t_0 \< z_\varepsilon (s,\;\cdot\;),\(a(s,\;\cdot\;)z(s,\;\cdot\;)\) \star \phi_\varepsilon \>_{L^2} \d s  \\
&+ 2\int^t_0  \< z_\varepsilon (s,\;\cdot\;),\; (I-\Delta)^{-1} \( \( a(s,\;\cdot\;)z(s,\;\cdot\;)\)\star\phi_\varepsilon \)\>_{L^2}\d s  \\
&+ \sum_{i=1}^\infty \int^t_0 \left< (e^i z (s,\;\cdot\;)) \star \phi_\varepsilon,
 \;  (e^i z (s,\;\cdot\;)) 
 \star \phi_\varepsilon  \right>_{H^{-1}} \d s \\
&+ 2  \int^t_0  \< z_\varepsilon (s,\;\cdot\;), (e^0 z)(s,\cdot)
 \star \phi_\varepsilon \right>_{H^{-1}} \d s  + M^\varepsilon_t
\end{align}
 where
\begin{equation}\label{e5.6}
  M^\varepsilon_t = 2 \sum^\infty_{i=1} \int_0^t 
\left< z_\varepsilon (s,\;\cdot\;),\; (e^i z)(s,\;\cdot\;) \star \phi_\varepsilon
\right>_{H^{-1}} \d W^i_s.
\end{equation}
Below we will justify that \eqref{e5.6} is well-defined.
We summarize \eqref{e5.5} into
\begin{equation*}  
g_\varepsilon(t) = \wt g_\varepsilon(t)+M^\varepsilon_t, t \in [0,T].
\end{equation*}
We remark that 
\begin{align} \label{e5.6bis}
\sum^\infty_{i=1} \int_0^T \left (\< z
 (s,\;\cdot\;),\; (e^i z) (s,\;\cdot\;) \>_{H^{-1}}\right)^2  \d s 
&= \sum^\infty_{i=1} \int_0^T \left (\< (e^i z)(s,\;\cdot\;),\; (I - \Delta)^{-1} z (s,\;\cdot\;) \>_{L^2}\right)^2  \d s  \\
& \le \sum^\infty_{i=1} \int_0^T \Vert  (e^i z) (s,\;\cdot\;)\Vert_{L^2}^2 \Vert z(s,\cdot) \Vert_{H^{-2}}^2 \d s \\
&\le  \sum_{i=1}^\infty \Vert e^i\Vert_\infty^2
\sup_{s \in [0,T]} \Vert z(s,\cdot) \Vert_{H^{-2}}^2 
 \int_0^T \Vert z (s,\;\cdot\;)\Vert_{L^2}^2  \d s,  
\end{align}
because $z: [0,T] \rightarrow H^{-2}$ is a.s. continuous by Remark
 \ref{R5.2} b).

Consequently,
\begin{equation*}
M_t = \sum^\infty_{i=1} \int^t_0 \< z (s,\;\cdot\;),\; (e^i z)(s,\;\cdot\;)\>_{H^{-1}}
 \d W^i_s,
\end{equation*}
is a well-defined local martingale.

It is also not difficult to show that 
for $\varepsilon > 0$, 
 $$  \sum^\infty_{i=1}   \int^T_0\< z_\varepsilon
   (s,\;\cdot\;),\; (e^i z)(s,\;\cdot\;) \star \phi_\varepsilon \>_{H^{-1}}^2
 \d s < \infty, \ {\rm a.s.}$$
and so $M^\varepsilon$ defined in \eqref{e5.6} is also a well-defined local martingale.

By assumption we have of course $(\o \not \in N_0)$
\begin{equation}\label{e5.7}
\int_{[0,T]\times \R} (z_\varepsilon (s,\xi)-z(s,\xi))^2 \d s \d \xi 
\quad
\substack{\longrightarrow\\ \varepsilon \to 0} 0,
\end{equation}
\begin{equation}\label{e5.8}
\int_{[0,T]\times \R} ((az) \star \phi_\varepsilon -az)^2 (s,\xi) \d s \d \xi
 \quad \substack{\longrightarrow\\ \varepsilon \to 0} 0,
\end{equation}
\begin{equation}\label{e5.8bis}
\int_{[0,T]\times \R} ((e^i z)(s,\cdot) \star \phi_\varepsilon - (e^i z)(s,\cdot))^2 (\xi) \d s \d \xi \quad
\substack{\longrightarrow\\ \varepsilon \to 0} 0,
\end{equation}
for every $ i \ge 0$,
because $z,az,  e^i z \in L^2([0,T]\times \R),  i \ge 0$.
By usual estimates on convolutions, there is a universal constant $C$
such that 
\begin{equation} \label{e500}
\int_{[0,T]\times \R} ((e^i z)(s,\cdot) \star \phi_\varepsilon)^2(\xi) \d s \d \xi
 \le
 \int_{[0,T]\times \R} z^2(s,\xi) (e^i)^2(\xi) \d s \d \xi  \le
C \Vert e^i \Vert_\infty^2 \Vert z \Vert^2_{L^2([0,T] \times \R)}. 
\end{equation}
By Lebesgue dominated convergence theorem, using \eqref{e5.8bis}, it follows that
 (for $\o \notin N_0$), 
\begin{equation}\label{e5.8ter}
\sum^\infty_{i=0} \int^T_0 \Vert  (e^i z) (s,\;\cdot) \star \phi_\varepsilon  - 
e^i z(s,\;\cdot\;)  \Vert_{L^2}^2 \d s \rightarrow_{\varepsilon \rightarrow 0} 0.
\end{equation}
Using \eqref{e5.7} and \eqref{e5.8ter}, it is not difficult to show that
(for $\o \notin N_0$)
\begin{equation}\label{e5.9}
\sum^\infty_{i=0} \int^T_0 \( 
\< z_\varepsilon (s,\;\cdot\;), (e^i z) (s,\cdot) \star \phi_\varepsilon \>_{H^{-1}}
 -\< z(s,\;\cdot\;), (e^i z) (s,\;\cdot\;)\>_{H^{-1}} \)^2 \d s
\end{equation}
converges to zero.
Now   (for $\o \notin N_0$), 
\begin{align}\label{e5.10}
& \sum^\infty_{i=0}  \int^T_0 \left \vert \norm{(e^i z)(s,\;\cdot\;) \star \phi_\varepsilon}^2_{H^{-1}}-\norm{(e^i z)(s,\;\cdot\;)}^2_{H^{-1}}\right \vert \d s 
\\
&\le \sqrt{2} \sum^\infty_{i=0} 
\sqrt{\int^T_0 \( \norm{(e^i z)(s,\;\cdot\;)  \star \phi_\varepsilon -
 (e^i z)(s,\;\cdot\;)}^2_{H^{-1}}\)\d s
 \int^T_0 \(  \norm{(e^i z)(s,\;\cdot\;)  \star \phi_\varepsilon}_{H^{-1}}^2 + 
 \norm{(e^i z)(s,\;\cdot\;)}^2_{H^{-1}}\)\d s } \\
& \le \sqrt{2} \sqrt{\sum^\infty_{i=0} 
\int^T_0 \( \norm{(e^i z)(s,\;\cdot\;) \star \phi_\varepsilon -
 (e^i z)(s,\;\cdot\;)}^2_{H^{-1}}\)\d s} 
\sqrt{\sum^\infty_{i=0} 
 \int^T_0 \( \norm{(e^i z)(s,\;\cdot\;)  \star \phi_\varepsilon}_{H^{-1}}^2 + 
 \norm{(e^i z)(s,\;\cdot\;)}^2_{H^{-1}}\)\d s }
\end{align}
converges to zero, because of \eqref{e500} and \eqref{e5.8ter} and
Assumption \ref{AMultipl}.

Taking into account \eqref{e5.7}, \eqref{e5.8}, \eqref{e5.9} and 
\eqref{e5.10} we obtain (for $\o \notin N_0$), that 
$\lim\limits_{\varepsilon \to 0} \wt g_\varepsilon (t) = \wt g (t),\; t \in [0,T]$,
 where 
\begin{align}\label{e5.11}
\wt g (t) = & -2 \int^t_0 \<z(s,\;\cdot\;), a(s,\;\cdot\;) z(s,\;\cdot\;)\>_{L^2} \d s\\
& + 2 \int^t_0  \<z(s,\;\cdot\;), (I-\Delta)^{-1} (a(s,\;\cdot\;) z(s,\;\cdot\;))\>_{L^2} \d s\\
& + 2 \int^t_0  \<z(s,\;\cdot\;), (e^0 z)(s,\;\cdot\;)\>_{H^{-1}} \d s \\
& + \sum^\infty_{i=1} \int^t_0 \<(e^i z)(s,\;\cdot\;), (e^i z)(s,\;\cdot\;) 
 \>_{H^{-1}} \d s.
\end{align}
The convergence of the second term of the right-hand side of \eqref{e5.5} 
to the second term of the right-hand side of \eqref{e5.11} holds again 
due to \eqref{e5.7} and \eqref{e5.8}, cutting the difference in two pieces and using Cauchy-Schwarz's inequality. On the other hand the convergence of \eqref{e5.9} 
to zero implies that 
$M^\varepsilon \to M$  ucp, so that the ucp limit of $\wt g_\varepsilon (t)
+M^\varepsilon_t$ is equal to $\wt g(t)+M_t$. So, after a possible modification of the
 $P$-null set $N_0$, setting $g(t) := \norm{z(t,\;\cdot\;)}^2_{H^{-1}}$,
 for $\o\notin N_0$, we have
\begin{align}\label{e5.12}
g(t)& +2 \int^t_0 \< z(s,\;\cdot\;), a(s,\;\cdot\;)z(s,\;\cdot\;)\>_{L^2} \d s\\
&= 2 \int^t_0 \< (I-\Delta)^{-1} z(s,\;\cdot\;), a(s,\;\cdot\;)z(s,\;\cdot\;)\>_{L^2} \d s  \\
&+ 2 \int^t_0 \d s \< z(s,\;\cdot\;),  (e^0 z)(s,\;\cdot\;)  \>_{H^{-1}} \d s \\
&+ \sum^\infty_{i=1} \int^t_0 \< (e^i z)(s,\;\cdot\;),  (e^i z)(s,\;\cdot\;)
  \>_{H^{-1}} \d s  + M_t.
\end{align}
By the inequality
\begin{equation*}
2bc \leq   b^2 \norm{a}_\9  +  \frac{c^2}{\norm{a}_\9},
\end{equation*}
$b,c\in\R$, it follows that
\begin{align*}
2 \int^t_0 & < (I-\Delta)^{-1} z(s,\;\cdot\;),(az)(s,\;\cdot\;) >_{L^2} \d s\\
& \leq \norm{a}_\9 \int^t_0 \norm{(I-\Delta)^{-1} z(s,\;\cdot\;)}^2_{L^2} \d s\\
& + \frac{1}{\norm{a}_\9} \int^t_0 < (az)(s,\;\cdot\;),(az)(s,\;\cdot\;)>_{L^2} \d s\\
& \leq \norm{a}_\9 \int^t_0 \norm{z(s,\;\cdot\;)}^2_{H^{-2}} \d s\\
& + \frac{1}{\norm{a}_\9} \norm{a}_\9 \int^t_0 \< z(s,\;\cdot\;), az(s,\;\cdot\;)\>_{L^2} \d s.
\end{align*}
Since $\norm{\; \cdot \;}_{H^{-2}} \leq \norm{\; \cdot \;}_{H^{-1}}$, 
\eqref{e5.12} gives now (for $\o \notin N_0$),
\begin{align*}
g(t) & + \int^t_0 \< z(s,\;\cdot\;),(az)(s,\;\cdot\;) \>_{L^2} \d s\\
\leq M_t & + \sum_{i=1}^\infty \int^t_0 \< (e^i z)(s,\;\cdot\;), (e^i z)
(s,\;\cdot\;) \>_{H^{-1}} \d s  \\
& + 2 \int^t_0  \< z(s,\;\cdot\;), (e^0 z)(s,\;\cdot\;)\>_{H^{-1}} \d s 
 + \norm{a}_\9 \int^t_0 \norm{z(s,\;\cdot\;)}^2_{H^{-1}} \d s.
\end{align*}
Since $e^i,\; i \ge 0$ are $H^{-1}$-multipliers with norm $\shc(e^i)$,
  (for $\o \notin N_0$)
\begin{align}\label{e5.13}
g(t) & + \int^t_0 \< z(s,\;\cdot\;),(az)(s,\;\cdot\;) \>_{L^2} \d s\\
& \leq M_t + \shc \int^t_0 \norm{z(s,\;\cdot\;)}^2_{H^{-1}} \d s = 
M_t + \shc \int^t_0 g(s) \d s, \; \forall \; t\in [0,T],
\end{align}
where 
$$ \shc = \sum_{i=1}^\infty \shc(e^i)^2 + 2 \shc(e^0) + \Vert a \Vert_\infty.$$
We proceed now via localization which is possible because 
$ t \mapsto  \int^t_0 \norm{z(s,\;\cdot\;)}^2 \d s$ and $ t \mapsto   
 \Vert z(t,\cdot)\Vert_{H^{-2}}$
are continuous $P$ a.s.
 Let $(\varsigma^\ell)$ be the sequence of stopping times
\begin{equation} \label{TauLocalization}
\varsigma^\ell:= \inf \{ t\in [0,T] | \int^t_0 \d s \norm{z(s,\;\cdot\;)}^2_{L^2} \ge \ell, \Vert z(t, \cdot) \Vert^2_{H^{-2}} \geq \ell \}.
\end{equation}
If $\{ \; \}=\emptyset$ we convene that $\varsigma^\ell = +\9$. Clearly, the stopped processes $M^{\varsigma^\ell}$ are (square integrable) martingales starting at zero. We evaluate \eqref{e5.13} at $t\wedge\varsigma^\ell$.
Taking  expectation we get
$$
E(g(t\wedge\varsigma^\ell))  \leq \underbrace{E(M_{\varsigma^\ell \wedge t})}_{=0} +
\shc E\(\int_0^{t\wedge\varsigma^\ell} g(s) \d s \)
 \leq \shc \int_0^t \d s E(g(s\wedge\varsigma^\ell)).
$$
By Gronwall's lemma it follows that
$
 E(g(t\wedge\varsigma^\ell))=0\quad \forall\; \ell \in \N^\star.
$
Since $g$ is a.s. continuous and $\lim_{\ell \to \infty} t\wedge\varsigma^\ell = T$
 a.s., for every $t\in[0,T]$, by Fatou's lemma we get
$$
E(g(t))  = E\(\liminf_{\ell\to \9} g(t\wedge\varsigma^\ell) \) \leq \liminf_{\ell\to \9} E\( g(t\wedge\varsigma^\ell) \)=0,
$$
and the result follows.
\end{proof}

\section{Uniqueness for the porous media equation 
with  noise}

\label{SUniqPME}

We first discuss  first in which sense the SPDE \eqref{PME}
has to be understood.

 \begin{defi} \label{DSPDE}
 A  random field $X = (X(t, \xi, \omega), 
t \in [0,T], \xi 
 \in \R, \omega \in \Omega) $ is said to be a solution to \eqref{PME} if
 $P$ a.s. 
we have the following.
\begin{itemize}
\item $X \in C([0,T]; \shs'(\R)) \cap  L^2([0,T]; L^1_{\rm loc} (\R))$.
\item $X$ is an $\shs'(\R)$
-valued $(\shf_t)$-progressively measurable process.
\item 
For any test function $\varphi \in \shs(\R)$ with compact support,
 $ t \in ]0,T]$, 
 we have 
\begin{eqnarray} \label{EDist}
\int_\R X(t,\xi) \varphi(\xi) \d\xi &=& \int_\R  \varphi(\xi) x_0(\d\xi) +
\halb \int_0^t \d s \int_\R 
\psi(X(s,\xi,\cdot)) \varphi''(\xi) d\xi
 \nonumber\\
& & \\
&+& \int_{[0,t] \times \R} X(s,\xi) \varphi(\xi) \mu(\d s,\xi) \d\xi \ {\rm a.s.} 
\nonumber
\end{eqnarray}
\end{itemize}
\end{defi}

We can state now the uniqueness
theorem for the stochastic porous media equation.

\begin{theo} \label{TB1} 
Suppose that Assumptions \ref{ALipschitz} and \ref{AMultipl} hold.
Then equation \eqref{PME} admits at most one
 solution among the random fields $X:]0,T]\times\R\times\O\to\R$ such that 
\begin{equation}\label{eFB1}
\int_{[0,T]\times\R} X^2(s,\xi)\d s\d \xi <\9 \quad \text{a.s.}
\end{equation}
\end{theo}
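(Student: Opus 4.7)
The plan is to reduce uniqueness for the nonlinear PME \eqref{PME} to the already proven linear Fokker-Planck uniqueness Theorem \ref{T51} by the standard linearization trick. Given two solutions $X^1, X^2$ of \eqref{PME} in the sense of Definition \ref{DSPDE} satisfying \eqref{eFB1}, I set $z := X^1 - X^2$ and define the random field
\begin{equation*}
a(s, \xi, \omega) := \frac{1}{2}\, \mathbf{1}_{\{X^1 \neq X^2\}}(s, \xi, \omega) \cdot \frac{\psi(X^1(s, \xi, \omega)) - \psi(X^2(s, \xi, \omega))}{X^1(s, \xi, \omega) - X^2(s, \xi, \omega)}.
\end{equation*}
Because $\psi$ is Lipschitz with constant $L$ and monotone increasing (Assumption \ref{ALipschitz}), the quotient takes values in $[0, L/2]$; progressive measurability of $a$ is inherited from that of $X^1, X^2$, so $a$ satisfies Assumption \ref{ABorel}. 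Subtracting the two distributional identities \eqref{EDist} and using $\tfrac{1}{2}[\psi(X^1) - \psi(X^2)] = a z$, one finds that for every $\varphi \in \Sscr(\R)$ with compact support and every $t \in [0,T]$,
\begin{equation*}
\int_\R \varphi(\xi) z(t, \xi)\, \d\xi = \int_0^t \d s \int_\R a(s, \xi) z(s, \xi) \varphi''(\xi)\, \d\xi + \int_{[0,t] \times \R} \varphi(\xi) z(s, \xi)\, \mu(\d s, \xi) \quad \text{a.s.},
\end{equation*}
which is precisely the Fokker-Planck SPDE \eqref{e5.1} in the sense of \eqref{e5.2}, with initial datum $x_0 = 0$ (the absolutely continuous $z(s, \cdot) \in L^2(\R)$ being identified with the measure $z(s, \xi)\, \d\xi$).

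I then invoke Theorem \ref{T51} with $z^1 := X^1$ and $z^2 := X^2$. Hypothesis i) is immediate from \eqref{eFB1}, and hypothesis ii) follows from Definition \ref{DSPDE}. The delicate point is hypothesis iii): the statement of Theorem \ref{T51} requires each $X^i$ to be $\Mscr(\R)$-valued with $\int_0^T \Vert X^i(s, \cdot)\Vert_{\var}^2\, \d s < \infty$ a.s., which need not follow from \eqref{eFB1} alone. A careful reading of the proof of Theorem \ref{T51} shows, however, that the total-variation condition is used only to give a direct meaning to the stochastic integral in \eqref{e5.1} via the measure-valued formulation \eqref{DSIbis}. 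In the present setting, the analogous integrals $\int_{[0,t]\times\R} \varphi(\xi) X^i(s, \xi)\, \mu(\d s, \xi)$ are already well-defined in the $L^2$-sense through \eqref{DSI}, because $e^i \varphi \in L^2(\R)$, $X^i(s, \cdot) \in L^2(\R)$, and Assumption \ref{AMultipl} provides the summability. Moreover, every estimate in the proof of Theorem \ref{T51} that is sensitive to the regularity of the \emph{difference} uses only $z \in L^2([0,T]\times \R)$, which does hold here.

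The remainder of the proof then reproduces the scheme of Theorem \ref{T51} essentially verbatim: mollify $z$ in $\xi$ to obtain $z_\varepsilon \in C([0,T]; L^2(\R))$, apply It\^o's formula to $g_\varepsilon(t) = \Vert z_\varepsilon(t)\Vert_{H^{-1}}^2$, pass to the limit $\varepsilon \to 0$ with the aid of \eqref{e5.7}--\eqref{e5.10}, and arrive at the inequality \eqref{e5.13} for $g(t) = \Vert z(t, \cdot)\Vert_{H^{-1}}^2$. Localization through the stopping times \eqref{TauLocalization}, followed by Gronwall's lemma and Fatou's lemma, gives $\E[g(t)] = 0$ for each $t \in [0,T]$, hence $z \equiv 0$ and $X^1 = X^2$. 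The principal obstacle is conceptual rather than computational: one must justify that hypothesis iii) of Theorem \ref{T51} can be relaxed to \eqref{eFB1} by reinterpreting every measure-valued stochastic integral of the original proof as an $L^2$-pairing tested against Schwartz functions, which is in fact how the PME is formulated in \eqref{EDist} to begin with.
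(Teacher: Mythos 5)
Your argument is correct, and at its computational core it coincides with the paper's own proof: both mollify the difference, apply It\^o's formula to $\norm{\,\cdot\,}^2_{H^{-1}}$, absorb the troublesome term $2\int_0^t \langle (I-\Delta)^{-1}z(s,\cdot),\,\cdot\,\rangle_{L^2}\,\d s$ into the coercive term using the Lipschitz bound and monotonicity of $\psi$, and conclude via the stopping times \eqref{TauLocalization}, Gronwall and Fatou. The difference is organizational. You linearize first, writing $\tfrac12\bigl(\psi(X^1)-\psi(X^2)\bigr)=a\,z$ with $0\le a\le L/2$, and frame the argument as an application of Theorem \ref{T51}; the paper keeps $\psi(X^1)-\psi(X^2)$ throughout and uses the inequality of Remark \ref{remB2ter}, $(\psi(r)-\psi(\bar r))(r-\bar r)\ge\alpha(\psi(r)-\psi(\bar r))^2$ with $\alpha=1/L$, which is exactly equivalent to your step $\frac{1}{\norm{a}_\infty}\norm{az}^2_{L^2}\le\langle z,az\rangle_{L^2}$. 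You correctly identify the one obstruction to a black-box reduction: hypothesis iii) of Theorem \ref{T51} requires each $z^i$ to be $\shm(\R)$-valued with $\int_0^T\Vert z^i(s,\cdot)\Vert^2_{\var}\,\d s<\infty$, which \eqref{eFB1} does not provide, and your diagnosis --- that this hypothesis only serves to give meaning to \eqref{e5.2} via \eqref{DSIbis}, while every quantitative step of the proof uses only $z=z^1-z^2\in L^2([0,T]\times\R)$ together with $z\in C([0,T];H^{-2})$ --- is accurate; this is presumably why the authors write a separate, self-contained proof for \eqref{PME} instead of citing Theorem \ref{T51}. Your packaging buys conceptual clarity (the PME difference genuinely solves a degenerate Fokker--Planck equation with bounded measurable coefficient, which is what Theorem \ref{T51} was designed for); the cost is that the ``reduction'' is really a re-proof, as you acknowledge. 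Two small points to tidy up: \eqref{EDist} is stated for compactly supported test functions while \eqref{e5.2} uses all of $\shs(\R)$, so a density argument (as in Remark \ref{remB2} iv)) is needed to reach the $\shs'(\R)$-valued identity; and the a.s.\ continuity of $z$ with values in $H^{-2}$, needed both for \eqref{TauLocalization} and for the bound showing $M$ is a well-defined local martingale, must be extracted from \eqref{eFB3} for the difference (the initial data cancel, so no assumption on $x_0$ is required).
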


\begin{rem}\label{remB2} Let $X$ be a solution of \eqref{PME} verifying 
\eqref{eFB1}. 

\begin{enumerate}[i)]
\item There is  a $P$-null set $N_0$, so that for $\o \not \in N_0,\;
 X(t,\;\cdot\;)\in L^2(\R)$ for almost all $t\in[0,T]$.
\item Condition \eqref{eFB1} also implies that
$\int^T_0 \norm{X(s,\;\cdot\;)}^2_{H^{-1}} \d s <\9 \quad \text{a.s.}$
\item Since $\psi$ is Lipschitz and $\psi(0) = 0$, \eqref{eFB1} implies that
$\int_0^T\Vert \psi(X(r,\cdot))\Vert_{L^2}^2 \d r < \infty$ a.s. So,  
$\int^t_0 \d s \psi (X(s,\cdot))$ is a Bochner integral with values in 
$L^2(\R)$.
\item Consequently, 
 $t\mapsto \Delta \( \int\limits^t_0  \psi (X(s,\;\cdot\;)) \d s \)$
 is continuous from $[0,T]$ to  $H^{-2}$ and so also in
 $\shs'(\R)$;
since $e^i,  i \ge 0,$ are $H^{-1}$-multipliers verifying Assumption
\ref{AMultipl}, 
by Kolmogorov's lemma
$t\mapsto \int\limits^t_0  X(s,\cdot) \mu (\d s, \cdot) $ 
admits a version which
belongs to $C\left([0,T]; H^{-1}(\R) \right)$.
Since $x_0 \in \shs'(\R)$  and $X \in C([0,T]; \shs'(\R))$ a.s.,
 it follows that
for $\o$ not belonging to a null set, we have
\begin{equation}\label{eFB3}
X(t,\;\cdot\;)=x_0+\Delta \(\int\limits^t_0 \psi (X(s,\;\cdot\;))\d s \right)
 + \int\limits^t_0   X(s,\cdot)  \mu ( \d s , \cdot), \quad t\in [0,T],
\end{equation}
as an identity in $\shs'(\R)$.
\item If $x_0 \in H^{-1}$, then $X\in C\([0,T];H^{-2}\)$, for $\o \not \in N_0$,
$N_0$ a $P$-null set.
\item If $x_0 \in H^{-s}$ for some $s \ge 2$, then $X\in C\([0,T];H^{-s}\)$, 
for $\o \not \in N_0$,
$N_0$ a $P$-null set.
\item We  consider a sequence of mollifiers $(\phi_\varepsilon)$ converging to the Dirac measure. Then $X^\varepsilon
 (t,\;\cdot\;) = X(t,\cdot) \star \phi_\varepsilon$ belongs a.s. 
to $C\([0,T];L^2(\R)\)$.
\end{enumerate}
\end{rem}
\begin{rem} \label{remB2ter}
Since $\psi$ is Lipschitz, there is $\alpha>0$ such that
\begin{equation*}
\(\psi(r)-\psi(\bar r)\) (r-\bar r)\geq \alpha \( \psi (r)-\psi(\bar r)\)^2.
\end{equation*}
\end{rem}
\begin{rem}\label{remB2bis} 
\begin{enumerate}
\item
We note that condition 2. in Assumption \ref{AMultipl}  is
 more general than (3.1) of \cite{BRR3}, which can be reformulated  here as
 follows. 
\begin{Assumption} \label{E3.1BRR3}
\begin{enumerate}
\item  $e^i \in W^{1,\infty}$ for every $i \ge 0$.
\item 
$e^i, i \ge 0,  \ {\rm belong \ to} \ H^1.$
\item  
$(e^i)$  is an orthonormal system of $H^{-1}$ and \\
$\sum_{i=1}^\infty  \left(\Vert (e^i)'\Vert_\infty^2 
  +  \Vert e^i\Vert_\infty^2  + \Vert e^i \Vert_{H^{-1}}^2 \right)  < \infty.$
\end{enumerate}
\end{Assumption}
\item  An easy adaptation of Theorem 3.4 of \cite{BRR3}, in order to take into account $e^0$, 
constitutes an existence result for
\eqref{PME}: it says the following. \\
 Besides  Assumptions \ref{E3.1BRR3} and \ref{ALipschitz}, let us 
suppose moreover that
 $x_0 \in L^2$ or $\psi$   is non-degenerate
(i.e. $ \frac{\psi(x)}{x} \ge 0, \ \forall x \neq 0$).
 Then, 
there is a random field $X$
such that 
\begin{equation}\label{eFB1bis}
E \left(\int_{[0,T]\times\R} X^2(s,\xi)\d s \d \xi\right) <\9,
\end{equation}
with
 $t \mapsto  \int_0^t \psi(X(s,\cdot)) \d s \in  C([0,T]; H^1(\R))$ a.s.
\item So, under the assumptions of item 2. above, the solution $X$
is unique among those fulfilling \eqref{eFB1}.
\end{enumerate}
\end{rem}

\begin{proof}[Proof]
Let $(\phi_\varepsilon, \varepsilon > 0)$ be a sequence of mollifiers as 
in Remark \ref{remB2} 
vii).
Let $X^1,X^2$ be two solutions of \eqref{PME}.
For $i = 1,2$,  we set 
$(X^i)^\varepsilon(t,\cdot) = X^i(t,\cdot) \star \phi_\varepsilon.$
We set $X=X^1 - X^2$ and   $X^\varepsilon=(X^1)^\varepsilon-(X^2)^\varepsilon$
 which a.s. belongs to $C\([0,T];\;L^2(\R)\) \subset C\([0,T];\;H^{-1}\)$.
We set
$$ 
g_\varepsilon(t) :=  \norm{X^\varepsilon(t,\;\cdot\;)}^2_{H^{-1}}
 = \int_\R \((I-\Delta)^{-1} X^\varepsilon (t,\;\cdot\;) \) (\xi) X^\varepsilon (t,\xi) \d \xi.
$$ 
It\^o's formula gives
\begin{equation} \label{eFB4}
g_\varepsilon(t) =  2\int^t_0 < X^\varepsilon (s,\;\cdot\;), X^\varepsilon (\d s,\;\cdot\;) >_{H^{-1}}  + \sum^\9_{i=1} \int^t_0 \norm{(e^i X)(s,\cdot) 
\star \phi_\varepsilon}^2_{H^{-1}} \d s.
\end{equation}
On the other hand we have
\begin{equation}\label{eFB5}
X^\varepsilon (t,\;\cdot\;) =  \int^t_0  \Delta \left[ \left\{\psi
 (X^1(s,\;\cdot\;))-\psi (X^2(s,\;\cdot\;))\right\}\star \phi_\varepsilon \right] \d s 
 + \int^t_0  \phi_\varepsilon \star  (X \mu(\d s,\;\cdot\;)),
\end{equation}
where the notation of the latter integral is self-explanatory.
 So
\begin{align}\label{eFB6}
(I-\Delta)^{-1} X^\varepsilon (t,\;\cdot\;) & = -\int^t_0 \(\psi(X^1(s,\;\cdot\;))
-\psi(X^2(s,\;\cdot\;))\)\star \phi_\varepsilon \d s  \\
& + \int^t_0 (I-\Delta)^{-1}\(\psi(X^1(s,\;\cdot\;))-\psi(X^2(s,\;\cdot\;))\)\star \phi_\varepsilon \d s   \\
& + \sum^\9_{i=0} \int^t_0  \left[(I-\Delta)^{-1}(e^i X(s,\;\cdot\;))\right]\star \phi_\varepsilon  \d W^i_s.
\end{align}
We define 
\begin{equation*}
M_t = \sum^\9_{i=1} \int^t_0 < (I-\Delta)^{-1} X (s,\;\cdot\;),
 e^i X(s,\;\cdot\;) >_{L^2} \d W^i_s.
\end{equation*}
We observe that $M$ is well-defined and it is
a local martingale. Indeed, by Remark \ref{remB2} v), 
$X \in C([0,T];H^{-2})$. So by   similar arguments as in \eqref{e5.6bis},
\begin{align} \label{EFB100}
 \sum^\9_{i=1} \int^t_0 < (I-\Delta)^{-1} X (s,\;\cdot\;), 
 e^i X(s,\;\cdot\;) >_{L^2}^2 \d s &\le 
\sup_{s \in [0,T]} \Vert X(s,\cdot) \Vert_{H^{-2}}^2 
\sum_{i = 1}^\infty  \Vert e^i\Vert_\infty^2  \nonumber \\
&\int_0^T \Vert X (s,\;\cdot\;)\Vert_{L^2}^2  \d s < \infty.
\end{align}
Using \eqref{eFB4}, \eqref{eFB5} and \eqref{eFB6} we get
\begin{align}\label{eFB7}
g_\varepsilon (t) = & \sum^\9_{i=1}\int^t_0  \norm{(e^i X)(s,\cdot)\star
 \phi_\varepsilon}^2_{H^{-1}} \d s  \\
& - 2\int^t_0 < X^\varepsilon (s,\;\cdot\;), \left[\psi (X^1(s,\;\cdot\;))-\psi 
(X^2(s,\;\cdot\;))\right]\star \phi_\varepsilon >_{L^2}   \d s \\
& + 2\int^t_0 < X^\varepsilon (s,\;\cdot\;), (I-\Delta)^{-1}\left[\psi
 (X^1(s,\;\cdot\;))-\psi (X^2(s,\;\cdot\;))\right]\star \phi_\varepsilon >_{L^2}  \d s \\
& + 2\int^t_0 < X^\varepsilon (s,\;\cdot\;), (I-\Delta)^{-1} \left[e^0 
X(s,\;\cdot\;)\right]\star \phi_\varepsilon >_{L^2}  \d s
 + M_t^\varepsilon,
\end{align}
where $M^\varepsilon$ is the local martingale defined by
\begin{equation*}
M^\varepsilon_t = \sum^\9_{i=1} \int^t_0 < X^\varepsilon (s,\;\cdot\;), 
(I-\Delta)^{-1} \(e^i X(s,\;\cdot\;)\) \star \phi_\varepsilon >_{L^2} \d W^i_s,
\end{equation*}
which is again well-defined by similar arguments
as in  the proof of \eqref{EFB100}.
Taking into account \eqref{eFB1} and the Lipschitz property for $\psi$, we can take the limit when $\varepsilon \to 0$ in \eqref{eFB7} and for $g(t):=\norm{X(t,\;\cdot\;)}^2_{H^{-1}}$, to obtain
\begin{align}\label{eFB8}
g(t) & + 2 \int^t_0  \< X(s,\;\cdot\;),\; \psi \(  X^1(s,\;\cdot\;)\)-\psi \(  X^2(s,\;\cdot\;)\)\>_{L^2} \d s  \\
= & \sum^\9_{i=1} \int^t_0  \norm{e^i X(s,\;\cdot\;)}^2_{H^{-1}}\d s  \\
& + 2 \int^t_0  \< (I-\Delta)^{-1}X(s,\;\cdot\;),\; \psi \(  X^1(s,\;\cdot\;)\)-\psi \(  X^2(s,\;\cdot\;)\)\>_{L^2} \d s \\
& + 2 \int^t_0  \< X(s,\;\cdot\;), e^0 X(s,\;\cdot\;)\>_{H^{-1}} \d s  + M_t.
\end{align}
The convergence $M^\varepsilon \to M$ when $\varepsilon \to 0$ is ucp, since
\begin{equation*}
\sum^\9_{i=1} \int^t_0 \big | \< X^\varepsilon (s,\;\cdot\;),\;
 [e^i X(s,\;\cdot\;)]\star \phi_\varepsilon  \>_{H^{-1}} 
- \< X (s,\;\cdot\;),\;
 e^i X(s,\;\cdot\;)  \>_{H^{-1}}  
\big |  ^2   \d s   \substack{\longrightarrow\\
 \varepsilon \to 0} 0,
\end{equation*}
which follows by similar arguments as in the proof of \eqref{e5.9}, using 
\eqref{eFB1}.

\begin{equation*}
2ab \leq \frac{a^2}{\alpha}+b^2\alpha,
\end{equation*}
for $a,b\in \R$, $\alpha$ being the constant
 appearing at  Remark \ref{remB2ter}, the second term 
of the right-hand side of equality \eqref{eFB8} is bounded by
\begin{align*}
& \frac{1}{\alpha} \int^t_0  \norm{(I-\Delta)^{-1}X(s,\;\cdot\;)}^2_{L^2} \d s
  + \alpha \int^t_0 
\norm{\psi \(  X^1(s,\;\cdot\;)\)-\psi \(  X^2(s,\;\cdot\;)\)}_{L^2}^2 \d s\\
\leq & \frac{1}{\alpha} \int^t_0  \norm{X(s,\;\cdot\;)}^2_{H^{-1}} \d s + 
\int^t_0  \< \psi \(  X^1(s,\;\cdot\;)\)-
\psi \(  X^2(s,\;\cdot\;)\),\; X(s,\;\cdot\;)\>_{L^2} \d s.
\end{align*}
This together with \eqref{eFB8} gives 
\begin{align}\label{eFB9}
g(t) & +\int^t_0 \< X(s,\;\cdot\;),\;\psi \(  X^1(s,\;\cdot\;)\)-\psi \(  X^2(s,\;\cdot\;)\)\>_{L^2} \d s  \\
\leq & 2 \int^t_0  \< X(s,\;\cdot\;),e^0 X(s,\;\cdot\;)\>_{H^{-1}} \d s   +
  \frac{1}{\alpha} \int^t_0  \norm{X(s,\;\cdot\;)}^2_{H^{-1}} \d s    \\
& + \sum^\9_{i=1}\int^t_0  \norm{(e^i X)(s, \cdot)}^2_{H^{-1}} \d s  +M_t, 
\ t \in [0,T] \pas  \\
\end{align}
Since $e^i,\; i\in \N$, are $H^{-1}$-multipliers and taking into 
account Assumption \ref{AMultipl}, we get
\begin{equation}\label{eFB10}
g(t)\leq M_t + (2 \shc_0 + \sum_{i = 1}^\infty \shc(e^i)^2 + \frac{1}{\alpha}) \int^t_0  g(s) \d s.
\end{equation}
The proof is then completed by localization as in \eqref{TauLocalization} at the end of Section \ref{S5}.
\end{proof}

\bigskip

{\bf ACKNOWLEDGMENTS} 
\noindent

Financial support through the SFB 701 at Bielefeld University and
NSF-Grant 0606615
is gratefully acknowledged.
The second named author  benefited partially from the support of the 
``FMJH Program Gaspard Monge in optimization and operation research'' 
(Project 2014-1607H). The authors are grateful to  Viorel Barbu
for stimulating discussions. The authors are grateful to the Referee whose comments have stimulated them
to drastically improve the first version of the paper.

\addcontentsline{toc}{section}{References}
\bibliographystyle{plain}
\bibliography{BRR_Bibliography}

\end{document}